\newtheorem{theorem}{Theorem}[section]
\newtheorem*{theorem*}{Theorem}
\newtheorem{lemma}[theorem]{Lemma}
\newtheorem{proposition}[theorem]{Proposition}
\newtheorem*{proposition*}{Proposition}
\newtheorem*{corollary*}{Corollary}
\newtheorem{conjecture}[theorem]{Conjecture}
\newtheorem{cit}[theorem]{Citation}
\newtheorem*{conjecture*}{Conjecture}
\newtheorem*{question*}{Question}
\newtheorem*{main:non_inn_am}{Theorem~\ref{thrm:non_inn_am}}
\theoremstyle{definition}
\newtheorem{definition}[theorem]{Definition}
\newcommand{\N}{\mathbb{N}}
\newcommand{\Z}{\mathbb{Z}}
\newcommand{\R}{\mathbb{R}}
\newcommand{\Cantor}{\mathfrak{C}}
\newcommand{\vNA}{\mathcal{L}}
\DeclareMathOperator{\PSL}{PSL}
\numberwithin{equation}{section}
\begin{document}

\title[Non-inner amenability of the Higman--Thompson groups]{Non-inner amenability of the\\ Higman--Thompson groups}
\date{\today}
\subjclass[2020]{Primary 43A07;   
                 Secondary 20F65} 

\keywords{Higman--Thompson group, inner amenability, centralizers, Ping-Pong Lemma}

\author[E.~Bashwinger]{Eli Bashwinger}
\address{Department of Mathematics, University of Utah, Salt Lake City, UT 84112}
\email{u6060268@utah.edu}

\author[M.~C.~B.~Zaremsky]{Matthew C.~B.~Zaremsky}
\address{Department of Mathematics and Statistics, University at Albany (SUNY), Albany, NY 12222}
\email{mzaremsky@albany.edu}

\begin{abstract}
We prove that the Higman--Thompson groups $T_n$ and $V_n$ are non-inner amenable for all $n\ge 2$. This extends Haagerup and Olesen's result that Thompson's groups $T=T_2$ and $V=V_2$ are non-inner amenable. Their proof relied on machinery only available in the $n=2$ case, namely Thurston's piecewise-projective model for Thompson's group $T$, so our approach necessarily utilizes different tools. This also provides an alternate proof of Haagerup--Olesen's result when $n=2$.
\end{abstract}

\maketitle
\thispagestyle{empty}

\section*{Introduction}

Thompson's groups $F$, $T$, and $V$ were introduced by Thompson in the 1960s (see, e.g., \cite{mckenzie73}), and have since attained a prominent position in group theory. They often serve as examples of groups with unusual behavior and generate interesting open questions, most famously it remains open whether $F$ is amenable. These groups can be defined as certain groups of self-homeomorphisms of the standard binary Cantor set $\Cantor_2=\{1,2\}^\N$, and have natural generalizations $F_n$, $T_n$, and $V_n$ using the $n$-ary Cantor set $\Cantor_n=\{1,\dots,n\}^\N$. The groups $V_n$ were first studied by Higman \cite{higman74}, and are now typically called the Higman--Thompson groups. The groups $F_n$ and $T_n$ were technically not studied by Higman, and first appeared in work of Brown \cite{brown87}, but here we will refer to $F_n$, $T_n$, and $V_n$ collectively as Higman--Thompson groups. (As Brown says in \cite{brown87}, $F_n$ and $T_n$, ``\emph{were not considered by Higman, but they are simply the obvious generalizations of Thompson's $F$ and $T$.}'') As a remark, the groups we denote here by $F_n$, $T_n$, and $V_n$ were denoted $F_{n,\infty}$, $T_{n,1}$, and $G_{n,1}$, respectively, in \cite{brown87}.

A (discrete) group $G$ is \emph{inner amenable} if there is a finitely additive probability measure on $G\setminus\{1\}$ that is invariant under the conjugation action of $G$. We say $G$ is \emph{amenable} if there is a finitely additive probability measure on $G$ that is invariant under left multiplication. It is a standard fact that amenable groups are inner amenable, but the converse is far from true (for example see \cite{tucker-drob20}). If $G$ has a non-trivial finite conjugacy class then it is inner amenable for uninteresting reasons, so we typically either require the measure to be atomless (as in \cite{tucker-drob20}), or require $G$ to be ICC, meaning the conjugacy class of every non-trivial element is infinite. All the groups we consider here are ICC.

In \cite{haagerup17}, Haagerup and Olesen proved that $T$ and $V$ are not inner amenable. Our main result here is that the same is true of all the Higman--Thompson groups $T_n$ and $V_n$:

\begin{main:non_inn_am}
For all $n\ge 2$, the Higman--Thompson groups $T_n$ and $V_n$ are non-inner amenable.
\end{main:non_inn_am}

The proof in \cite{haagerup17} that $T$ and $V$ are non-inner amenable relies on the so called Thurston model for $T$, as the group of piecewise-$\PSL_2(\Z)$ homeomorphisms of the real projective line; see \cite[Section~3]{haagerup17} and \cite{fossas11} for more details. This does not generalize to $T_n$ for $n>2$, and so their proof cannot be adapted to $T_n$ and $V_n$. Our strategy here has essentially the same first step as theirs: it suffices to find a non-amenable subgroup $H$ of $T_n$ such that the centralizer in $H$ of any non-trivial $\alpha\in V_n$ is amenable (Citation~\ref{cit:subgroup_trick}). But without the Thurston model in the $n>2$ case, the rest of their proof has no analog. Instead, we construct such an $H$ dynamically, by applying a version of the Ping-Pong Lemma to carefully chosen elements of $T_n$ (Proposition~\ref{prop:one_fixed}), constructed in such a way to ensure the restrictions to certain regions of the circle are contracting. One crucial tool we use to show that this $H$ works is the description of the centralizer $C_{V_n}(\alpha)$ from \cite{bleak13}. We should mention that when $n=2$ our approach also provides a new proof of Haagerup--Olesen's result, which is arguably an improvement since it is not as beholden to the $T$-specific Thurston model.

This completes the picture for $T_n$ and $V_n$, and we should remark that there is also a complete picture for $F_n$. In \cite{jolissaint98}, Jolissaint proved that Thompson's group $F$ is inner amenable, and Picioroaga proved in \cite{picioroaga06} that all the $F_n$ are inner amenable. In fact the group von Neumann algebras $\vNA(F_n)$ of the $F_n$ are all McDuff factors, and so have Property $\Gamma$, which are stronger properties. See \cite{bashwinger23} for additional examples of generalizations of $F$ yielding McDuff factors, such as the pure braided Thompson group $bF$. As a remark, the fact that none of the $T_n$ or $V_n$ are inner amenable implies that none of their group von Neumann algebras $\vNA(T_n)$, $\vNA(V_n)$ are McDuff factors, nor do they have Property $\Gamma$. We should also point out that amenable groups have all of these properties.

To summarize, we have the following chain of properties for a (discrete, ICC) group $G$:
\[
G \text{ amenable} \Rightarrow \vNA(G) \text{ McDuff} \Rightarrow \vNA(G) \text{ has Property }\Gamma \Rightarrow G \text{ inner amenable,}
\]
we now know that the groups $T_n$ and $V_n$ satisfy none of these, and the groups $F_n$ satisfy all of them except amenability is open. As a remark, it would interesting to try and understand whether the groups $T_n$ and $V_n$ are all properly proximal, in the sense of Boutonnet--Ioana--Peterson \cite{boutonnet21}. This is a property that inner amenable groups cannot have (even up to measure equivalence), and it is an open problem whether there can exist a non-properly proximal group that is not measure equivalent to any inner amenable group; see \cite[Question~1(c)]{boutonnet21} and \cite{ishan24}. Also see \cite{ding23} for a wealth of additional results on proper proximality.

We should mention that the ``Higman--Thompson groups'' often include further generalizations denoted $V_{n,r}$ (and $T_{n,r}$), but when $r>1$ we do not have a description of the centralizers like in \cite{bleak13} for the $r=1$ case. If it turns out that centralizers in $V_{n,r}$ behave similarly to centralizers in $V_n=V_{n,1}$, then it seems likely our approach would show that all the $V_{n,r}$ and $T_{n,r}$ are non-inner amenable. Other related groups for which inner amenability is open in general include braided Thompson groups and R\"over--Nekrashevych groups, and it would be interesting to try and use similar tools to prove for example that none of these are inner amenable.

This paper is organized as follows. In Section~\ref{sec:thompson} we recall the construction of $T_n$ and $V_n$ as groups of self-homeomorphisms of $\Cantor_n$. In Section~\ref{sec:circle} we view $T_n$ as a group of self-homeomorphisms of the circle, and construct a particular free subgroup $H$ of $T_n$. In Section~\ref{sec:main_proof} we use the computation of centralizers in $V_n$ from \cite{bleak13}, and the particular free subgroup $H$, to prove our main result.

\subsection*{Acknowledgments} We are grateful to Matt Brin for helpful conversations, and to Jesse Peterson for help with tracking down references. The second author is supported by grant \#635763 from the Simons Foundation.

\section{Higman--Thompson groups}\label{sec:thompson}

In this section we recall the definitions of the Higman--Thompson groups $F_n$, $T_n$, and $V_n$. Let $\Cantor_n$ denote the $n$-ary Cantor set, that is, $\Cantor_n=\{1,\dots,n\}^\N$ with the usual product topology. The elements of $\Cantor_n$ are infinite words $\kappa$ in the alphabet $\{1,\dots,n\}$. Let $\{1,\dots,n\}^*$ denote the set of all finite words $w$ in the alphabet $\{1,\dots,n\}$. For each $w\in\{1,\dots,n\}^*$ let $\Cantor_n(w)$ denote the \emph{cone} $\Cantor_n(w)=\{w\kappa\mid \kappa\in \Cantor_n\}$. Note that $\Cantor_n(w)$ is canonically homeomorphic to $\Cantor_n$ via the map $w\kappa\mapsto \kappa$.

\begin{definition}[Higman--Thompson groups]\label{def:hig_thomp}
The \emph{Higman--Thompson group} $V_n$ is the group of self-homeomorphisms of $\Cantor_n$ of the following form. Take a partition of $\Cantor_n$ into finitely many cones $\Cantor_n(w_1),\dots,\Cantor_n(w_k)$, another partition into the same number of cones $\Cantor_n(v_1),\dots,\Cantor_n(v_k)$, and map each $\Cantor_n(w_i)$ to some $\Cantor_n(v_j)$ via the ``prefix replacement'' map $w_i\kappa\mapsto v_j\kappa$. This defines a self-homeomorphism of $\Cantor_n$, and $V_n$ is the collection of all such self-homeomorphisms. The groups $F_n$ and $T_n$ are the subgroups of $V_n$ defined as follows. Note that the lexicographic order on $\{1,\dots,n\}^*$ is a total order. When writing a partition $\Cantor_n(w_1),\dots,\Cantor_n(w_k)$, assume that $w_1<\cdots<w_k$. Now $F_n$ is the subgroup of $V_n$ consisting of all homeomorphisms as constructed above, such that for each $i$, $\Cantor_n(w_i)$ maps to $\Cantor_n(v_i)$, and $T_n$ is the subgroup of all homeomorphisms such that for each $i$, if $\Cantor_n(w_i)$ maps to $\Cantor_n(v_j)$ then $\Cantor_n(w_{i+1})$ maps to $\Cantor_n(v_{j+1})$ (with subscripts taken modulo $k$).
\end{definition}

As a remark, it is not immediately obvious from this definition that $F_n$, $T_n$, and $V_n$ are groups, i.e., that they are closed under composition, but it is a straightforward exercise to verify.

One crucial fact that we will need is the following:

\begin{lemma}\label{lem:free_subgroup}
Let $H\le T_n$ be a non-abelian free subgroup. Let $\kappa\in \Cantor_n$ be any point in the $T_n$-orbit of $111\cdots$. Then the stabilizer in $H$ of $\kappa$ is cyclic.
\end{lemma}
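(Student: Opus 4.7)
The plan is to prove a slightly stronger statement: $\Stab_{T_n}(\kappa)$ itself contains no non-abelian free subgroup. Once that is in hand, the conclusion follows immediately, since $\Stab_H(\kappa) = H \cap \Stab_{T_n}(\kappa)$ is a subgroup of the free group $H$ and hence itself free, but cannot be non-abelian, so must be cyclic. Every $\kappa$ in the $T_n$-orbit of $\kappa_0 := 111\cdots$ has the form $g \kappa_0$ for some $g \in T_n$, whereupon $\Stab_{T_n}(\kappa) = g\, \Stab_{T_n}(\kappa_0)\, g^{-1}$ is a conjugate subgroup, so it suffices to handle $\kappa = \kappa_0$.

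I would then construct a ``germ at $\kappa_0$'' homomorphism $\sigma \colon \Stab_{T_n}(\kappa_0) \to \Z$ as follows. Given $h \in \Stab_{T_n}(\kappa_0)$, refine the defining partition of $h$ so that some cone $\Cantor_n(1^m)$ appears as a block of the domain partition; since $h(\kappa_0) = \kappa_0$, the prefix-replacement rule forces this block to be sent to a cone of the form $\Cantor_n(1^k)$ (the image word must consist entirely of $1$'s). Set $\sigma(h) := k - m$. A routine check shows that $\sigma(h)$ is independent of the refinement $m$ (refining to depth $m+1$ sends $\Cantor_n(1^{m+1})$ to $\Cantor_n(1^{k+1})$) and that composing elements adds shifts, so $\sigma$ is a homomorphism.

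Next I would analyze $K := \ker \sigma = \bigcup_{m \ge 1} K_m$, where $K_m$ is the subgroup of elements of $T_n$ that are the identity on all of $\Cantor_n(1^m)$. For each fixed $m$, restriction to the clopen complement $\Cantor_n \setminus \Cantor_n(1^m)$ embeds $K_m$ into an $F$-type Higman--Thompson group: excising the arc $\Cantor_n(1^m)$ turns the cyclic order on the remaining set into a linear order, so the cyclic-order-preserving prefix-replacement action of $K_m$ becomes linear-order-preserving. By the Brin--Squier theorem, such $F$-type groups contain no non-abelian free subgroup; thus neither does any $K_m$, and neither does $K$, since any non-abelian free subgroup would contain a rank-$2$ free subgroup, which is finitely generated and hence sits inside some $K_m$.

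Finally, if $S \le \Stab_{T_n}(\kappa_0)$ were a non-abelian free subgroup, then $\sigma$ would send $S$ into the abelian group $\Z$, forcing $[S, S] \subseteq K$; but $[S, S]$ is itself free of countably infinite rank by Nielsen--Schreier and is therefore non-abelian, contradicting the previous paragraph. I expect the main technical obstacle to be verifying cleanly that the restriction map places $K_m$ inside a Brin--Squier-style PL group, essentially identifying the ``$F$-type group obtained by excising a cone from $\Cantor_n$'' with a familiar Higman--Thompson $F$-group.
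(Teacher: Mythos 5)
Your proposal is correct, but it takes a noticeably longer route than the paper to the same key fact. After the identical conjugation reduction to $\kappa_0=111\cdots$, the paper's entire argument is the observation that $\Stab_{T_n}(\kappa_0)$ is \emph{exactly} $F_n$ (the cone of any domain partition containing $111\cdots$ is the lexicographically first one, and the cyclic-order condition in the definition of $T_n$ then forces the element to preserve the linear order of the blocks), after which a single citation of Brin--Squier gives that $F_n$, hence $H\cap F_n$, has no non-abelian free subgroup, so the stabilizer in $H$ is cyclic. Your germ homomorphism $\sigma$ is precisely the ``$\log_n$ of the slope at the fixed point'' map $F_n\to\Z$, and your analysis of $\ker\sigma=\bigcup_m K_m$ by restricting to the complement of $\Cantor_n(1^m)$, combined with the commutator-subgroup trick, in effect reproves the needed no-free-subgroup property of the stabilizer from the corresponding property of groups of PL homeomorphisms of an interval --- which is the very same Brin--Squier theorem the paper invokes for $F_n$. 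So both arguments bottom out in the same citation; yours trades the one-line identification of the stabilizer with $F_n$ for a self-contained verification via the germ map and an exhaustion of the kernel. What your version buys is that it does not require recognizing the stabilizer as a familiar group (and so would adapt to stabilizers with no clean description); what it costs is length, plus the technical point you flag yourself --- that excising a cone linearizes the cyclic order so that $K_m$ embeds in a group of PL homeomorphisms of a closed arc --- which is true and routine in the circle model of $T_n$ but does need to be written out. All the individual steps (well-definedness of $\sigma$, $K_m\subseteq K_{m+1}$, finitely generated subgroups of the union lying in some $K_m$, and $[S,S]$ being non-abelian free) check out.
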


\begin{proof}
Since the properties of a subgroup being non-abelian, free, or cyclic are all preserved under conjugation, without loss of generality we can assume $\kappa=111\cdots$. The stabilizer in $T_n$ of $111\cdots$ is $F_n$, so the stabilizer in $H$ of $111\cdots$ is $H\cap F_n$. This is free, since it is a subgroup of the free group $H$, and $F_n$ contains no non-abelian free subgroups \cite[Theorem~3.1]{brin85}, so $H\cap F_n$ is abelian, hence cyclic.
\end{proof}

\section{Free groups acting on the circle}\label{sec:circle}

In addition to viewing $T_n$ as a group of self-homeomorphisms of $\Cantor_n$, we can also view it as a group of self-homeomorphisms of the circle $S^1$. More precisely, viewing the circle as $\R/\Z$, $T_n$ is the group of all orientation-preserving self-homeomorphisms of $S^1$ that are piecewise linear, with slopes powers of $n$ and breakpoints in $\Z[1/n]/\Z$ (this is the viewpoint taken for example in \cite{stein92}). With this viewpoint, it is easy to see that $T_n$ contains non-abelian free subgroups, using the Ping-Pong Lemma on $S^1$. For example, see \cite[Lemma~3.2]{bleak11} and its proof for an explicit recipe for building non-abelian free subgroups. In this section we will construct a specific non-abelian free subgroup $H$ in $T_n$ with certain features that will prove useful later.

Let us recall some basic dynamics. A continuous map $f\colon X\to X$ from a metric space $(X,d)$ to itself is called \emph{contracting} if $d(f(x),f(x'))<d(x,x')$ for all $x\ne x'$ in $X$. Given a homeomorphism $f\colon X\to X$ with a fixed point $x$, call $x$ an \emph{attracting} fixed point for $f$ if there is an open neighborhood $U$ of $x$ in $X$ such that for any $x'\in U$, the sequence $x',f(x'),f^2(x'),\dots$ converges to $x$. Call $x$ a \emph{repelling} fixed point for $f$ if it is an attracting fixed point for $f^{-1}$. Note that any continuous self-map $f$ of a closed interval has a fixed point (by the intermediate value theorem), and if $f$ is contracting then this fixed point is attracting and unique (since the distance between two different fixed points would not be able to contract). In what follows we will view $S^1$ as a metric space with the usual arclength metric, so in particular every non-empty proper connected closed subset of $S^1$ is isometric to a closed interval.

\begin{lemma}[Contracting Ping-Pong]\label{lem:pong}
Let $a$ and $b$ be a pair of orientation-preserving self-homeomorphisms of $S^1$, such that each $c\in\{a,b,a^{-1},b^{-1}\}$ has exactly one attracting fixed point in $S^1$. Suppose for each such $c$ that we have a connected closed set $P(c)\subseteq S^1$ containing the attracting fixed point of $c$, such that the $P(c)$ are pairwise disjoint. Assume for each such $c$ that $c(S^1\setminus P(c^{-1}))\subseteq P(c)$, and that the restriction of $c$ to $S^1\setminus P(c^{-1})$ is contracting. Then the group $\langle a,b\rangle$ is free of rank $2$, and every non-trivial element of $\langle a,b\rangle$ has exactly one attracting fixed point.
\end{lemma}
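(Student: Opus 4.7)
My proposed plan is a two-stage argument: first establish freeness by standard ping-pong tracking, and then pin down the unique attracting fixed point of each nontrivial element by a contraction argument, after reducing to the cyclically reduced case.

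For freeness, given a nontrivial reduced word $w = c_1 c_2 \cdots c_k$ in $\{a,b,a^{-1},b^{-1}\}$ (meaning $c_{i+1} \ne c_i^{-1}$ for all $i$), I would track the image of an arbitrary $x \in S^1 \setminus P(c_k^{-1})$. The hypothesis $c(S^1 \setminus P(c^{-1})) \subseteq P(c)$ gives $c_k(x) \in P(c_k)$, and reducedness ($c_{k-1} \ne c_k^{-1}$) together with pairwise disjointness of the $P(c)$ places $c_k(x)$ inside $S^1 \setminus P(c_{k-1}^{-1})$, so $c_{k-1} c_k(x) \in P(c_{k-1})$. Iterating, $w(x) \in P(c_1)$. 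Since two disjoint nonempty closed subsets of the connected space $S^1$ cannot cover it, I can choose $x \notin P(c_1) \cup P(c_k^{-1})$; then $w(x) \in P(c_1)$ while $x \notin P(c_1)$, so $w \ne 1$.

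For the fixed-point analysis, my first move is to reduce to the cyclically reduced case by writing $w = u v u^{-1}$ with $v$ cyclically reduced and nontrivial; since conjugation transports attracting fixed points and preserves their uniqueness, it suffices to treat $v$. So assume $w = c_1 \cdots c_k$ is cyclically reduced, i.e., also $c_1 \ne c_k^{-1}$. Set $J = S^1 \setminus P(c_k^{-1})$ with closure $\bar J$ (a proper closed arc, hence an interval). The same tracking as above, combined with strict contraction at each step, shows that $w|_J$ is strictly contracting and $w(\bar J) \subseteq P(c_1)$; moreover $P(c_1) \subset J$ since $P(c_1) \cap P(c_k^{-1}) = \emptyset$. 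Therefore any fixed point of $w$ in $\bar J$ must lie in $P(c_1)$, the contraction on $J$ forces at most one such point, and an intermediate-value argument on the interval $\bar J$ delivers existence of $p \in P(c_1)$. Since $w$ is a strictly contracting self-map of the compact arc $P(c_1)$, every orbit in $P(c_1)$—and hence every orbit in $J$ after one step—converges to $p$, so $p$ is attracting with basin containing $J$.

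The remaining worry is a second attracting fixed point of $w$ hidden inside the interior of $P(c_k^{-1})$; this global uniqueness is the step I expect to be the main obstacle, because the ping-pong geometry only controls dynamics outside $P(c_k^{-1})$. My plan is to repeat the entire analysis on $w^{-1} = c_k^{-1} \cdots c_1^{-1}$, which is cyclically reduced as well. That argument produces a unique attracting fixed point $p^* \in P(c_k^{-1})$ of $w^{-1}$, i.e.\ a repelling fixed point of $w$, and moreover shows $w$ has no other fixed points in the closure of $S^1 \setminus P(c_1)$. Because $P(c_1) \cap P(c_k^{-1}) = \emptyset$, the two closed sets $\bar J$ and the closure of $S^1 \setminus P(c_1)$ together cover $S^1$, so combining the two halves of the argument accounts for every fixed point of $w$, leaving exactly $p$ attracting and $p^*$ repelling. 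This yields the desired conclusion.
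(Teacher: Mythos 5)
Your proposal is correct and follows essentially the same route as the paper: standard ping-pong for freeness, reduction to the cyclically reduced case by conjugation, location of all fixed points in $P(c_1)\cup P(c_k^{-1})$, existence and uniqueness of an attracting fixed point in $P(c_1)$ via contraction on a closed interval, and the symmetric argument for $w^{-1}$ to show the fixed point in $P(c_k^{-1})$ is repelling. The only cosmetic difference is in the freeness step, where you track a point outside $P(c_1)\cup P(c_k^{-1})$ rather than a third ping-pong set $P(e)$; both are valid.
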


\begin{proof}
The usual Ping-Pong Lemma shows that $\langle a,b\rangle$ is free of rank $2$; let us briefly recall how this works for the sake of completeness. For any non-empty reduced word $g$ in the alphabet $\{a,b,a^{-1},b^{-1}\}$, say with rightmost letter $c$ and leftmost letter $d$, choose $x\in\{a,b,a^{-1},b^{-1}\}\setminus\{c^{-1},d\}$. Now the assumptions ensure that $g(P(x))\subseteq P(d)$ and $P(d)\cap P(x)=\emptyset$, so $g$ does not act as the identity on $P(x)$, which shows $g\ne 1$. Hence no non-empty reduced word represents the identity in $\langle a,b\rangle$, which means $\langle a,b\rangle$ is free of rank $2$.

Now let $g\in \langle a,b\rangle\setminus\{1\}$, and we need to show $g$ has exactly one attracting fixed point. Since the property of having exactly one attracting fixed point is preserved under conjugation, without loss of generality $g$ is cyclically reduced. Also, we may assume $g$ has word length greater than $1$, since we already know $a$, $a^{-1}$, $b$, and $b^{-1}$ each have exactly one attracting fixed point. Say the cyclically reduced word $g$ is of the form $g=dwc$ for some (possibly empty) $w$, i.e, the rightmost letter of $g$ is $c$ and the leftmost is $d$, for $d\ne c^{-1}$. We first claim that if $g(x)=x$, then $x\in P(c^{-1})$ or $x\in P(d)$. Indeed, if $x\not\in P(c^{-1})$ then $c(x)\in P(c)$, and the standard ping-pong argument shows that $x=g(x)=dwc(x)\in P(d)$. Next we claim that $g$ really does have a fixed point in each of $P(d)$ and $P(c^{-1})$. Indeed, since $d\ne c^{-1}$ we have $g(P(d))\subseteq P(d)$, and $P(d)$ is isometric to a closed interval (being a closed, connected, proper subset of $S^1$), so $g$ has a fixed point in $P(d)$. An analogous argument applied to $g^{-1}$ shows that there is a fixed point in $P(c^{-1})$. Now we claim the fixed point of $g$ in $P(d)$ is an attracting fixed point, and that it is the unique attracting fixed point for $g$. First note that, viewing $g$ as a map from $P(d)$ to itself, our assumptions about certain restrictions being contracting ensure that the fixed point is unique and attracting in $P(d)$. Also, note that since $S^1\setminus P(c^{-1})$ is open, $g(S^1\setminus P(c^{-1}))$ and hence $g(P(d))$ lies in the interior of $P(d)$, which implies that the attracting fixed point is even attracting in all of $S^1$. An analogous argument applied to $g^{-1}$ shows that there is a unique fixed point in $P(c^{-1})$ and it is an attracting fixed point for $g^{-1}$ in $S^1$. In particular this is a repelling fixed point for $g$, and so we conclude that the attracting fixed point in $P(d)$ is the unique attracting fixed point for $g$ in $S^1$.
\end{proof}

One could argue theoretically that there exist $a$ and $b$ in $T_n$ satisfying the conditions of Lemma~\ref{lem:pong}, by appealing to the fact that $\Z[1/n]$ is dense in $\R$, and taking large enough powers to ensure that the desired restrictions are contracting. However, for the sake of concreteness, in the next proof we will construct explicit elements $a$ and $b$ in $T_n$ satisfying these conditions.

\begin{proposition}\label{prop:one_fixed}
There exists a non-abelian free subgroup $H$ of $T_n$ such that every non-trivial element of $H$ has exactly one attracting fixed point in $S^1$.
\end{proposition}

\begin{proof}
We need to find elements $a,b\in T_n$ and sets $P(a),P(a^{-1}),P(b),P(b^{-1}) \subseteq S^1$ satisfying the assumptions of Lemma~\ref{lem:pong}. First let $f$ be the map $f\colon [0,1/n]\to[0,1/n]$ given by the piecewise definition
\[
f(x) = \left\{\begin{array}{ll}
    n^3 x &\text{if } 0\le x\le (n^3-1)/n^7 \\
    x + (n^6-2n^3+1)/n^7 &\text{if } (n^3-1)/n^7 \le x\le 1/n^4 \\
    x/n^3 + (n^3-1)/n^4 &\text{if } 1/n^4 \le x\le 1/n \text{.}
\end{array}\right.
\]
See Figure~\ref{fig:ponging} for a picture of the map $f$, with the domain copy of $[1/n]$ on the bottom and the range copy on the top. The two most important features are that both interior breakpoints in the range are to the right of both interior breakpoints in the domain, and $f$ restricts to a contracting map $[1/n^4,1/n]\to[1/n^4,1/n]$ (both of these features rely on the fact that $n\ge 2$).

\begin{figure}[htb]
\centering
\begin{tikzpicture}[line width=0.8pt]
  \filldraw[lightgray!50] (0,0) -- (10,0) -- (10,4) -- (0,4) -- (0,0);
  \draw[line width=1.2pt] (0,0) -- (10,0) -- (10,4) -- (0,4) -- (0,0);
  \draw (2,0) -- (6,4)   (4,0) -- (8,4);
  \draw[->] (-1,0) -- (-1,4);
  \node at (-1.5,2) {$f$};
  
  \node at (0,-.5) {$0$}; \node at (2,-.5) {$(n^3-1)/n^7$}; \node at (4,-.5) {$1/n^4$}; \node at (10,-.5) {$1/n$};
  
  \node at (0,4.5) {$0$}; \node at (5,4.5) {$(n^3-1)/n^4$}; \node at (7.9,4.5) {$(n^6-n^3+1)/n^7$}; \node at (10,4.5) {$1/n$};
  
  \node at (2,2) {slope $n^3$}; \node at (5,2) {slope $1$}; \node at (8,2) {slope $1/n^3$};
\end{tikzpicture}
\caption{The map $f\colon [0,1/n]\to[0,1/n]$ (not to scale).}
\label{fig:ponging}
\end{figure}

Let $h\colon [0,1/n]\to [1/n,1]$ be the linear map $h(x)=1 - (n-1)x$. We now define $a\in T_n$ by specifying $a(x+\Z)$ for each $x\in [0,1]$ as follows:
\[
a(x+\Z) = \left\{\begin{array}{ll}
    f(x)+\Z &\text{if } 0\le x\le 1/n \\
    h\circ f\circ h^{-1}(x) + \Z &\text{if } 1/n \le x\le 1 \text{.}
\end{array}\right.
\]
By construction the breakpoints of $a$ all lie in $\Z[1/n]/\Z$, and it is easy to check that all slopes are powers of $n$, so $a$ really is an element of $T_n$. Note that $a$ has a repelling fixed point at $0+\Z$, an attracting fixed point at $1/n + \Z$, and no other fixed points. Now let $b$ be the conjugate of $a$ by the rotation $x+\Z \mapsto x+1/n^2 + \Z$. Thus $b$ has a repelling fixed point at $1/n^2 + \Z$, an attracting fixed point at $(n+1)/n^2 + \Z$, and no other fixed points.

Now we will build the sets $P(a),P(a^{-1}),P(b),P(b^{-1})$. Intuitively, each will be an arc of length $1/n^3$ (using the notion of ``length'' in $S^1=\R/\Z$ coming from $\R$). We construct them as follows, roughly working clockwise from $0+\Z$:
\[
P(a^{-1}) \coloneqq \{x+\Z\mid x\in [-(n-1)/n^4,1/n^4]\}\text{,}
\]
\[
P(b^{-1}) \coloneqq \{x+\Z\mid x\in [(n^2-n+1)/n^4,(n^2+1)/n^4]\}\text{,}
\]
\[
P(a) \coloneqq \{x+\Z\mid x\in [(n^3-1)/n^4,(n^3+n-1))/n^4]\}\text{,}
\]
\[
P(b) \coloneqq \{x+\Z\mid x\in [(n^3+n^2-1)/n^4,(n^3+n^2+n-1)/n^4]\}\text{.}
\]
Note that for each $c\in\{a,a^{-1},b,b^{-1}\}$, $P(c)$ contains the attracting fixed point of $c$, namely
\[
0+\Z\in P(a^{-1}) \text{, } 1/n^2 + \Z \in P(b^{-1}) \text{, } 1/n + \Z \in P(a) \text{, and } (n+1)/n^2 + \Z \in P(b) \text{.}
\]
Also note that these four sets are pairwise disjoint, since the fact that $n\ge 2$ ensures that we have the following four inequalities:
\[
1/n^4 < (n^2-n+1)/n^4 \text{, }
\]
\[
(n^2+1)/n^4 < (n^3-1)/n^4 \text{, }
\]
\[
(n^3+n-1))/n^4 < (n^3+n^2-1))/n^4 \text{, and }
\]
\[
(n^3+n^2+n-1)/n^4 < 1-(n-1)/n^4 \text{.}
\]
See Figure~\ref{fig:circle} for a visualization of all of this.

\begin{figure}[htb]
 \begin{tikzpicture}[line width=0.8pt]
  \draw[pink, line width=10pt] (210:4) arc (210:150:4);
  \draw[Apricot, line width=10pt] (-30:4) arc (-30:30:4);
  \draw[SkyBlue, line width=10pt] (60:4) arc (60:120:4);
  \draw[SpringGreen, line width=10pt] (240:4) arc (240:300:4);
  \draw (0:0) circle (4cm);
  \filldraw (0:4) circle (2pt); \node at (0:5.2) {$(n+1)/n^2$};
  \filldraw (30:4) circle (2pt); \node at (30:5.5) {$(n^3+n^2-1)/n^4$};
  \filldraw (60:4) circle (2pt); \node at (60:5) {$(n^3+n-1)/n^4$};
  \filldraw (90:4) circle (2pt); \node at (90:4.5) {$1/n$};
  \filldraw (120:4) circle (2pt); \node at (120:5) {$(n^3-1)/n^4$};
  \filldraw (150:4) circle (2pt); \node at (150:5) {$(n^2+1)/n^4$};
  \filldraw (180:4) circle (2pt); \node at (180:4.7) {$1/n^2$};
  \filldraw (210:4) circle (2pt); \node at (210:5.5) {$(n^2-n+1)/n^4$};
  \filldraw (240:4) circle (2pt); \node at (240:4.7) {$1/n^4$};
  \filldraw (270:4) circle (2pt); \node at (270:4.5) {$0$};
  \filldraw (300:4) circle (2pt); \node at (300:5) {$(n^4-n+1)/n^4$};
  \filldraw (330:4) circle (2pt); \node at (330:5.7) {$(n^3+n^2+n-1)/n^4$};
  
  \draw[SkyBlue,->,line width=1.5pt] (240:3.8) to[out=110,in=240] (110:3.7);
  \draw[SkyBlue,->,line width=1.5pt] (300:3.8) to[out=70,in=300] (70:3.7);
	
 \end{tikzpicture}
 \caption{A visualization of $S^1$ with the ``ping-pong sets'' $P(a^{-1})$, $P(b^{-1})$, $P(a)$, and $P(b)$ color-coded, in green, pink, blue, and orange respectively. To label the points of interest (fixed points of $a$ and $b$ and endpoints of the ping-pong sets), we write $x$ rather than $x+\Z$. The blue arrows indicate that $a$ maps the endpoints of $P(a^{-1})$ (and consequently all of $S^1\setminus P(a^{-1})$) into the interior or $P(a)$. To avoid making the picture too messy, we have not labeled all points of interest, e.g., breakpoints where the slope changes, and so forth. Also, we remark that no attempt was made to draw the picture ``to scale'' in any sense.}\label{fig:circle}
\end{figure}

We need to show that $a(S^1\setminus P(a^{-1}))\subseteq P(a)$, with similar statements for the other three elements. If we identify $(0,1)$ homeomorphically with $S^1\setminus\{0+\Z\}$ via $x\mapsto x+\Z$, then $S^1\setminus P(a^{-1})$ is identified with the open interval $(1/n^4,1-(n-1)/n^4)$. The image of this under $a$ equals the open interval $((n^6-n^3+1)/n^7,1-(n-1)(n^6-n^3+1)/n^7)$. The set $P(a)$ is identified with $[(n^3-1)/n^4,(n^3+n-1))/n^4]$, so now we just need to check that
\[
(n^3-1)/n^4 \le (n^6-n^3+1)/n^7 \text{ and }
\]
\[
1-(n-1)(n^6-n^3+1)/n^7 \le (n^3+n-1))/n^4 \text{.}
\]
Indeed, these are readily verified. It is similarly straightforward to see that $a^{-1}(S^1\setminus P(a))\subseteq P(a^{-1})$, and since the situation for $b$ and $b^{-1}$ looks exactly the same as that for $a$ and $a^{-1}$, just rotated by $1/n^2 + \Z$, the analogous result holds for them as well.

Lastly, we need to show that the restriction of $a$ to $S^1\setminus P(a^{-1})$ is contracting, and so forth for the other elements. Identify $(0,1)$ with $S^1\setminus\{0+\Z\}$ as before, so $S^1\setminus P(a^{-1})$ is the open interval $(1/n^4,1-(n-1)/n^4)$. In the construction of $a$, we see that $a$ has slope $1/n^3$ on this entire open interval (to see this, it helps to note that $h(1/n^4)=1-(n-1)/n^4$). Thus, the restriction of $a$ to this interval is indeed contracting. Similarly, the restriction of $a^{-1}$ to $S^1\setminus P(a)$ coincides with a linear map with slope $1/n^3$, hence is contracting. Since the situation for $b$ and $b^{-1}$ is the same as that for $a$ and $a^{-1}$, just rotated by $1/n^2 + \Z$, a parallel argument handles them as well.
\end{proof}

As a remark, the virtually free group $\Lambda$ constructed in \cite{haagerup17} inside $T=T_2$ does not have a finite index free subgroup of the above form, since their $\Lambda$ contains elements with a single fixed point in $S^1$ (arising from parabolic elements of $\PSL_2(\Z)$). In fact, this $\Lambda$ is simply the subgroup of $T$ generated by one standard generator and the square of another, so our approach here is not only quite different, but also quite a bit more difficult, than the one in \cite{haagerup17}.

\section{Proof of non-inner amenability}\label{sec:main_proof}

In this section we prove our main result:

\begin{theorem}\label{thrm:non_inn_am}
For all $n\ge 2$, the Higman--Thompson groups $T_n$ and $V_n$ are non-inner amenable.
\end{theorem}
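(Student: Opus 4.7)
The plan is to apply the subgroup criterion of Citation~\ref{cit:subgroup_trick}: exhibit a non-amenable subgroup $H$ of $T_n \le V_n$ such that the centralizer in $H$ of any non-trivial $\alpha \in V_n$ is amenable. Since $T_n \le V_n$, such a single $H$ will handle both non-inner amenability statements at once. The natural candidate is the free rank-$2$ subgroup $H$ from Corollary~\ref{cor:one_fixed}; it is non-amenable, and its amenable subgroups are precisely its cyclic ones. The problem thus reduces to showing that $C_H(\alpha)$ is cyclic for every non-trivial $\alpha \in V_n$.

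Fix such an $\alpha$ and pick $h \in C_H(\alpha) \setminus \{1\}$. By Corollary~\ref{cor:one_fixed}, $h$ has a unique attracting fixed point on $S^1$, which lifts to one or two fixed points of $h$ on $\Cantor_n$; the contracting local slopes single out a specific attracting fixed point $\kappa_h \in \Cantor_n$. Since $h\alpha = \alpha h$, the element $\alpha$ permutes $\Fix(h) \subseteq \Cantor_n$, and because the attracting/repelling dichotomy is preserved under conjugation, $\alpha$ must in fact fix $\kappa_h$. If two non-commuting elements $h_1, h_2 \in C_H(\alpha)$ shared a common attracting fixed point $\kappa$, and if $\kappa$ lay in the $T_n$-orbit of $111\cdots$, then Lemma~\ref{lem:free_subgroup} would place $h_1, h_2$ in the cyclic subgroup $\Stab_H(\kappa)$, giving a contradiction.

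Two tasks then remain. The first is a concrete dynamical check: for every non-trivial $h \in H$, the attracting fixed point $\kappa_h$ lies in the $T_n$-orbit of $111\cdots$. This should follow from the explicit construction of the generators $a, b$ in Corollary~\ref{cor:one_fixed}, whose $S^1$-fixed points sit at $\Z[1/n]$-rationals and whose local contracting behavior picks out the eventually-constant $n$-adic expansion as the attracting lift in $\Cantor_n$. The second, and the main obstacle, is to rule out a pair of non-commuting $h_1, h_2 \in C_H(\alpha)$ with distinct attracting fixed points $\kappa_1 \ne \kappa_2$ both fixed by $\alpha$. Here I will invoke the description of $C_{V_n}(\alpha)$ from \cite{bleak13}, which classifies centralizers via revealing pairs and thereby restricts how any element of $V_n$ can centralize $\alpha$ while simultaneously displaying the attracting dynamics that Corollary~\ref{cor:one_fixed} builds into elements of $H$. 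Translating this constraint back to $H$ should force $\langle h_1, h_2 \rangle$ to be abelian, contradicting the choice of $h_1, h_2$; once that is established, $C_H(\alpha)$ is cyclic and the theorem follows for both $T_n$ and $V_n$.
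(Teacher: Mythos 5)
Your overall frame---Citation~\ref{cit:subgroup_trick} applied to $V_n$ with the free group $H$ of Corollary~\ref{cor:one_fixed}, reducing everything to showing $C_H(\alpha)$ is cyclic for all $\alpha\in V_n\setminus\{1\}$---is exactly the paper's, and your observation that a non-trivial $h\in C_H(\alpha)$ forces $\alpha$ to respect the attracting fixed-point data of $h$ is essentially how the paper handles one of its three cases (finite-order $\alpha\in T_n$: such an $\alpha$ would have to fix a point of $S^1$, but a non-trivial finite-order element of $T_n$ has no fixed point in $S^1$, so $C_H(\alpha)=1$). However, the two ``remaining tasks'' you defer are where the real content lies, and neither goes through as described. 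First, the ``concrete dynamical check'' is false: the attracting fixed point of a general $h\in H$ (say $h=ab$) is the fixed point of a linear piece $x\mapsto n^kx+c$ with $k<0$ and $c\in\Z[1/n]$, i.e.\ $x=c/(1-n^k)$, which generically does \emph{not} lie in $\Z[1/n]/\Z$; its lift to $\Cantor_n$ is then not of the form $w111\cdots$, so $\kappa_h$ is typically not in the $T_n$-orbit of $111\cdots$ and Lemma~\ref{lem:free_subgroup} simply does not apply to it. (There is also a small gap in ``$\alpha$ must fix $\kappa_h$'': commuting with $h$ only shows $\alpha$ permutes the lifts of $\Fix(h)$, and an $n$-ary fixed point has two lifts in $\Cantor_n$ that could a priori be swapped.)

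Second, and more seriously, your ``main obstacle''---which covers in particular every infinite-order $\alpha$---is resolved only by the hope that \cite{bleak13} ``should force'' $\langle h_1,h_2\rangle$ to be abelian; no mechanism is given, and the mechanism the paper actually uses is quite different from tracking fixed points of individual elements of $H$. The paper splits into cases by the order of $\alpha$. For infinite order it uses the decomposition $C_{V_n}(\alpha)=C_{V_n(\mathfrak{T}_\alpha)}(\alpha)\times C_{V_n(\mathfrak{Z}_\alpha)}(\alpha)$: since $\mathfrak{Z}_\alpha$ is open and non-empty, it contains \emph{some} point of the dense $T_n$-orbit of $111\cdots$ (this is where Lemma~\ref{lem:free_subgroup} is genuinely used---on a point of the orbit lying in an open set, not on a fixed point of $h$), so $C_H(\alpha)\cap C_{V_n(\mathfrak{T}_\alpha)}(\alpha)$ is cyclic and normal in the free group $C_H(\alpha)$, hence trivial (or $C_H(\alpha)$ is already cyclic); therefore $C_H(\alpha)$ embeds into $C_{V_n(\mathfrak{Z}_\alpha)}(\alpha)$, which by \cite[Theorem~1.1]{bleak13} is a finite product of groups $(A_j\rtimes\Z)\times P_{q_j}$ with $A_j,P_{q_j}$ finite and hence is amenable. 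The remaining case of finite-order $\alpha\in V_n\setminus T_n$, which the paper handles via the finite non-empty set of discontinuity points of $\alpha$ on the circle, is absent from your outline and is not covered by your fixed-point argument (a finite-order element of $V_n$ can certainly fix points of $\Cantor_n$). So the proposal has the right skeleton but is missing the arguments for two of the three cases, and the one concrete verification it does propose would fail.
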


Just like in \cite{haagerup17}, we will use the following useful sufficient condition for non-inner amenability:

\begin{cit}\cite[Corollary~4.3]{haagerup17}\label{cit:subgroup_trick}
Let $H\le G$ be (discrete) groups. Suppose that $H$ is non-amenable, and that for all $g\in G\setminus\{1\}$ the centralizer $C_H(g)$ is amenable. Then $G$, and indeed any subgroup of $G$ containing $H$, is non-inner amenable.
\end{cit}

\begin{proof}[Proof of Theorem~\ref{thrm:non_inn_am}]
Let $H$ be any non-abelian free subgroup of $T_n$. (Toward the end of the proof we will specialize to $H$ satisfying the conditions in Proposition~\ref{prop:one_fixed}, but for most of this proof $H$ can be any non-abelian free subgroup of $T_n$.) We will apply Citation~\ref{cit:subgroup_trick}, using $V_n$ as $G$. This means that we want to show that for any $\alpha\in V_n\setminus\{1\}$ the centralizer $C_H(\alpha)$ is amenable (equivalently cyclic, since $H$ is free), and then we will conclude that any subgroup of $V_n$ containing $H$ is non-inner amenable, in particular this applies to $T_n$ and $V_n$.

The centralizer $C_{V_n}(\alpha)$ was computed in \cite{bleak13}. It decomposes as an (internal) direct product, which using roughly the notation of \cite[Corollary~5.2]{bleak13} is
\[
C_{V_n}(\alpha) = C_{V_n(\mathfrak{T}_\alpha)}(\alpha) \times C_{V_n(\mathfrak{Z}_\alpha)}(\alpha) \text{.}
\]
Let us explain these two factors. First, for $G\le V_n$ and $\mathfrak{X}\subseteq \Cantor_n$, let $G(\mathfrak{X})$ be the subgroup of $G$ consisting of all elements fixing every point in $\Cantor_n\setminus \mathfrak{X}$, i.e., all elements supported on $\mathfrak{X}$. The sets $\mathfrak{T}_\alpha$ and $\mathfrak{Z}_\alpha$ are certain open subsets of $\Cantor_n$ partitioning $\Cantor_n$. For the moment we do not have to worry about the precise definitions of $\mathfrak{T}_\alpha$ and $\mathfrak{Z}_\alpha$ (intuitively, $\mathfrak{T}_\alpha$ supports ``torsion behavior'' of $\alpha$ and $\mathfrak{Z}_\alpha$ supports ``non-torsion behavior''), except to first point out that $\mathfrak{Z}_\alpha$ is empty if and only if $\alpha$ has finite order. This follows from the definition of $\mathfrak{Z}_\alpha$ in \cite{bleak13} together with \cite[Lemma~4.4]{bleak13} (see also \cite[Proposition~6.1]{burillo09}).

\textbf{Infinite order:} Let us first focus on the case when $\alpha$ has infinite order, so $\mathfrak{Z}_\alpha$ is non-empty. Consider $C_H(\alpha)$, which we want to show is amenable. This is a free group, being a subgroup of the free group $H$. Its intersection with $C_{V_n(\mathfrak{T}_\alpha)}(\alpha)$ equals $C_{H(\mathfrak{T}_\alpha)}(\alpha)$, which is the group of elements of $H$ fixing all points in $\Cantor_n\setminus \mathfrak{T}_\alpha = \mathfrak{Z}_\alpha$. Since $\mathfrak{Z}_\alpha$ is open and non-empty, it contains a point in the $T_n$-orbit of $111\cdots$, this orbit being dense in $\Cantor_n$. Thus, $C_{H(\mathfrak{T}_\alpha)}(\alpha)$ is contained in the stabilizer of a point in this orbit, and so is cyclic by Lemma~\ref{lem:free_subgroup}. Since $C_{V_n(\mathfrak{T}_\alpha)}(\alpha)$ is normal in $C_{V_n}(\alpha)$, its intersection with $H$ is therefore a cyclic normal subgroup of $H$. Since $H$ is non-abelian and free, this means $C_{H(\mathfrak{T}_\alpha)}(\alpha)$ is trivial, since non-abelian free groups cannot have non-trivial cyclic normal subgroups. In particular, the projection map $C_{V_n}(\alpha)\to C_{V_n(\mathfrak{Z}_\alpha)}(\alpha)$ restricted to $C_H(\alpha)$ is injective.

Having shown that the free group $C_H(\alpha)$ embeds into $C_{V_n(\mathfrak{Z}_\alpha)}(\alpha)$, it now suffices to show that $C_{V_n(\mathfrak{Z}_\alpha)}(\alpha)$ is amenable, since this will imply $C_H(\alpha)$ is amenable as desired. By the formal statement of Theorem~1.1 of \cite{bleak13}, we have
\[
C_{V_n(\mathfrak{Z}_\alpha)}(\alpha) \cong \prod_{j=1}^t ((A_j\rtimes\Z)\times P_{q_j})
\]
for some $t\ge 0$ and some finite groups $A_j$ and $P_{q_j}$. We do not need to get into the precise details of what $A_j$ and $P_{q_j}$ are; since they are all finite, the factors $(A_j\rtimes\Z)\times P_{q_j}$ are all (elementary) amenable, and so $C_{V_n(\mathfrak{Z}_\alpha)}(\alpha)$ is also (elementary) amenable, and we are done.

\textbf{Finite order, in $V_n\setminus T_n$:} It remains to prove the result when $\alpha$ has finite order. In this case $\mathfrak{Z}_\alpha=\emptyset$, so the above approach does not work. First suppose $\alpha\in V_n\setminus T_n$. View $V_n$ as acting by left-continuous self-bijections of the circle (analogously to the corresponding situation in the proof of \cite[Theorem~4.4]{haagerup17} for $V_2=V$), so $\alpha$ has a positive finite number of points of discontinuity, which are all $n$-ary points. Since any element of $T_n$ is continuous, every element of $C_H(\alpha)$ must stabilize the set of points of discontinuity of $\alpha$, so any two elements of $C_H(\alpha)$ have non-trivial powers fixing some $n$-ary point. Since the stabilizer in $T_n$ of any $n$-ary point is isomorphic to $F_n$, which contains no non-abelian free subgroups, this shows that any two elements of $C_H(\alpha)$ have non-trivial powers that commute. Since $H$ is free, this implies $C_H(\alpha)$ is cyclic, as desired.

\textbf{Finite order, in $T_n$:} Finally, assume the finite order element $\alpha\ne 1$ lies in $T_n$. We will now finally specialize $H$ to no longer be an arbitrary non-abelian free subgroup but one satisfying the conditions in Proposition~\ref{prop:one_fixed}. In particular, as a group of self-homeomorphisms of $S^1$, every non-trivial element of $H$ has exactly one attracting fixed point. Since any element commuting with such an element must fix its unique attracting fixed point, this shows that if $C_H(\alpha)$ is non-trivial, $\alpha$ must fix a point of $S^1$. But the only finite order element of $T_n$ (or indeed of any group of orientation-preserving homeomorphisms of the circle) that fixes a point in $S^1$ is the identity, so we conclude that $C_H(\alpha)=\{1\}$.

We have shown that in all cases, $C_H(\alpha)$ is amenable, and so we are done.
\end{proof}

It would be interesting to extend our main result to R\"over--Nekrashevych groups. The \emph{R\"over--Nekrashevych group} $V_n(G)$ of a self-similar group $G$ of automorphisms of the infinite rooted $n$-ary tree is a group obtained by combining $V_n$ with $G$ in a certain way. See \cite{roever99,nekrashevych04,skipper21} for more details. We suspect that the above approach should reveal that all the $V_n(G)$ are non-inner amenable, but first one would need to understand centralizers in $V_n(G)$, and this seems quite difficult. Thus, we leave this as a conjecture:

\begin{conjecture}
The R\"over--Nekrashevych groups $V_n(G)$ are all non-inner amenable.
\end{conjecture}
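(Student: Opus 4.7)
The plan is to apply Citation~\ref{cit:subgroup_trick} with $V_n(G)$ as the ambient group and with the same free subgroup $H\le T_n\le V_n(G)$ furnished by Corollary~\ref{cor:one_fixed}. Since $H$ is non-abelian free, what remains is to show that $C_H(\alpha)$ is cyclic for every non-trivial $\alpha\in V_n(G)$. Following the proof of Theorem~\ref{thrm:non_inn_am}, I would split the analysis into three cases: $\alpha$ of infinite order, $\alpha$ of finite order acting discontinuously on $S^1$, and $\alpha$ of finite order acting continuously on $S^1$.

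Two of these cases should transport with modest work. For finite-order $\alpha$ acting continuously on $S^1$, any non-trivial $h\in C_H(\alpha)$ fixes its unique attracting fixed point on $S^1$, so $\alpha$ fixes that point as well; what is needed is a classification of the finite-order elements of $V_n(G)$ that act continuously on $S^1$ and possess an $S^1$-fixed point, the hope being that only the identity qualifies, as in $T_n$. For finite-order $\alpha$ with discontinuities on $S^1$, elements of $C_H(\alpha)$ permute the finite set of $n$-ary discontinuity points and so have non-trivial powers lying in the stabilizer in $T_n$ of an $n$-ary point, which is isomorphic to $F_n$ and contains no non-abelian free subgroup by \cite[Theorem~3.1]{brin85}; freeness of $H$ then forces $C_H(\alpha)$ to be cyclic, exactly as in the $V_n\setminus T_n$ part of Theorem~\ref{thrm:non_inn_am}.

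The hard case is $\alpha$ of infinite order, and the main obstacle is establishing an analogue of \cite[Corollary~5.2]{bleak13} and \cite[Theorem~1.1]{bleak13} for $V_n(G)$. Concretely, one would want a decomposition
\[
C_{V_n(G)}(\alpha) = C_{V_n(G)(\mathfrak{T}_\alpha)}(\alpha)\times C_{V_n(G)(\mathfrak{Z}_\alpha)}(\alpha)
\]
with $\mathfrak{Z}_\alpha\subseteq\Cantor_n$ a non-empty open ``non-torsion'' part, together with amenability of the non-torsion factor. Granted this, the argument of Theorem~\ref{thrm:non_inn_am} adapts essentially verbatim: density of the $T_n$-orbit of $111\cdots$ together with Lemma~\ref{lem:free_subgroup} would force $C_H(\alpha)\cap C_{V_n(G)(\mathfrak{T}_\alpha)}(\alpha)$ to be cyclic, hence trivial by normality and the non-abelian freeness of $H$, so $C_H(\alpha)$ would embed into the amenable non-torsion factor. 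The obstacle is that in $V_n(G)$ the non-torsion centralizer naturally involves centralizers \emph{inside} the self-similar group $G$ of the ``state'' automorphisms associated to $\alpha$, and these need not be amenable without hypotheses on $G$. A clean proof via this route will likely require additional assumptions on $G$ (e.g.\ that $G$ be amenable, or contracting), and producing the centralizer structure theorem for $V_n(G)$ in the required form is precisely the ``quite difficult'' step the authors single out.
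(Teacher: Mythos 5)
The statement you are addressing is left as an open conjecture in the paper: the authors explicitly do not prove it, and they name the same obstruction you do, namely that one would first need a structure theorem for centralizers in $V_n(G)$ analogous to \cite[Theorem~1.1]{bleak13}, which they describe as ``quite difficult.'' So your proposal is a roadmap rather than a proof, and to your credit you say so in the infinite-order case. That case genuinely cannot be completed as written: without a decomposition $C_{V_n(G)}(\alpha) = C_{V_n(G)(\mathfrak{T}_\alpha)}(\alpha)\times C_{V_n(G)(\mathfrak{Z}_\alpha)}(\alpha)$ and amenability of the non-torsion factor, the embedding of $C_H(\alpha)$ into an amenable group has nothing to map into, and as you note the non-torsion factor will in general see centralizers inside $G$ itself, so some hypothesis on $G$ seems unavoidable.

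However, the two finite-order cases are also substantially harder than ``modest work,'' for reasons your sketch does not acknowledge. Elements of $V_n(G)\setminus V_n$ are not eventual prefix replacements: deep in the tree they act by automorphisms coming from $G$, and when such a map is transferred to $S^1$ via $n$-ary expansions it need not be left-continuous with only finitely many discontinuities. So the dichotomy ``finitely many $n$-ary discontinuity points'' versus ``continuous with a fixed point'' does not partition the finite-order elements of $V_n(G)$, and the argument that $C_H(\alpha)$ permutes a finite set of $n$-ary points breaks down. Moreover, even for those finite-order $\alpha$ that do act continuously, the classification you hope for fails: $V_n(G)$ contains many non-trivial finite-order elements fixing points of $\Cantor_n$ (for instance, non-trivial torsion elements of $G$ supported away from a given boundary ray), so the $T_n$-specific fact that a finite-order orientation-preserving circle homeomorphism with a fixed point is trivial has no analogue. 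One would instead need to argue directly that such an $\alpha$ cannot fix the attracting fixed points of two non-commuting elements of $H$, which is an additional piece of work. In short, your proposal correctly reproduces the authors' intended strategy and correctly isolates the main obstacle, but it is not a proof, and the cases you label as transporting ``with modest work'' contain further genuine gaps.
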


\bibliographystyle{alpha}\newcommand{\etalchar}[1]{$^{#1}$}

\end{document}